\colorlet{examplefill}{yellow!80!black}
\theoremstyle{plain}
\newtheorem{theorem}{Theorem}[section]
\newtheorem{lemma}[theorem]{Lemma}
\newtheorem{proposition}[theorem]{Proposition}
\newtheorem{question}[theorem]{Question}
\theoremstyle{definition}
\newtheorem{definition}[theorem]{Definition}
\newtheorem{remark}[theorem]{Remark}
\newcommand{\Z}{\mathbb{Z}}
\newcommand{\R}{\mathbb{R}}
\newcommand{\C}{\mathbb{C}}
\newcommand{\K}{\mathbb{K}}
\newcommand{\PP}{\mathbb{P}}
\newcommand{\ch}{\operatorname{ch}}
\newcommand{\bch}{\operatorname{bch}}
\newcommand{\uch}{\operatorname{uch}}
\newcommand{\Sep}{\operatorname{Sep}}
\newcommand{\A}{{\mathcal A}}
\newcommand{\B}{{\mathcal B}}
\newcommand{\FF}{{\mathcal F}}
\newcommand{\LL}{{\mathcal L}}
\newcommand{\barH}{\overline{H}}
\newcommand{\barA}{\overline{\A}}
\newcommand{\barC}{\overline{C}}
\newcommand{\barF}{\overline{F}}
\newcommand{\barX}{\overline{X}}
\definecolor{deepblue}{cmyk}{0,0.83,1,0.70}
\definecolor{gray}{cmyk}{0,0,0,0.3}
\definecolor{rred}{cmyk}{0,1,1,0}
\definecolor{chairo}{cmyk}{0,0.83,1,0.70}
\definecolor{roypur}{cmyk}{0.75,0.90,0,0.1}
\definecolor{darkorc}{cmyk}{0.40,0.80,0.20,0}
\definecolor{oliv}{cmyk}{0.64,0.00,0.75,0.56}
\definecolor{azuro}{cmyk}{1,1,0,0.46}
\DeclareMathOperator{\Hom}{Hom}
\title{Maximal Betti number for local system cohomology of hyperplane arrangement complements}
\author{Yongqiang Liu\thanks{Institute of Geometry and Physics, University of Science and Technology of China, No 96 Jinzhai Road,  Hefei 230026, China
Email: liuyq@ustc.edu.cn} and 
Masahiko Yoshinaga\thanks{ 
Department of Mathematics, 
Graduate School of Science, Osaka University,
Toyonaka, Osaka 560-0043, Japan
E-mail: yoshinaga@math.sci.osaka-u.ac.jp}}
\date{\today}
\begin{document}
\maketitle

\begin{abstract}
Let $\LL$ be a rank one local system with field coefficient on the complement $M(\A)$ of an essential complex hyperplane arrangement $\A$ in $\C^\ell$. Dimca-Papadima and Randell independently showed that  $M(\A)$ is homotopy equivalent to a minimal CW-complex. It implies that $\dim H^k(M(\A),\LL) 
\leq b_k(M(\A))$. 
In this paper, we show that if $\A$ is real, then the inequality holds as equality  for some $0\leq k\leq \ell$ if and only if $\LL$ is the constant sheaf. The proof is using the descriptions of local system cohomology of $M(\A)$ in terms of chambers. 

\noindent
{\bf MSC-class}: 32S22 (Primary) 52C35, 32S50 (Secondary)\\
{\bf Keywords}: Hyperplane arrangements, Chambers, Minimal CW-complex, local system, Aomoto complex.
\end{abstract}


\section{Introduction}

Let $\A=\{H_1, \dots, H_n\}$ be an essential hyperplane arrangement in 
$\C^\ell$, $M(\A)=\C^\ell\smallsetminus\bigcup_{H\in\A}H$ be its complement. 
The first homology group $H_1(M(\A), \Z)$ is  a free abelian group generated by the meridians $\{\gamma_1, \dots, \gamma_n\}$  of hyperplanes. 
Fix a field $\K$. 
A rank one $\K$-local system $\LL$ on $M(\A)$ is 
determined by a homomorphism $\rho:H_1(M(\A), \Z)\longrightarrow
\K^\times$, which is determined by an $n$-tuple 
$q=(q_1, \dots, q_n)\in(\K^\times)^n$, where 
$q_j=\rho(\gamma_j)$. 

The hyperplane arrangement complement $M(\A)$ is homotopy equivalent to a minimal CW-complex, proved by Dimca-Papadima \cite{dim-pap} and Randell \cite{ran},  independently. 
Hence for any integer $k\geq 0$, $H^k(M(\A),\Z)$ is free abelian and 
\begin{equation} \label{inequality}
    \dim H^k(M(\A),\LL) \leq b_k(M(\A)).
\end{equation}
When $\K=\C$, this inequality  is first proved for almost all choice of $ q$ by Aomoto-Kita \cite[Proposition 2.1]{ao-ki}  and later in full generality by Cohen \cite{Cohen98}.

One may wonder when does this inequality holds as equality. In this paper, we give a complete answer to this question when $\A$ is real. Note that since $\A$ is essential, $b_k(M(\A))>0$ exactly for $0\leq k \leq \ell$. 
\begin{theorem}
\label{thm main 1} 
Let $\A=\{H_1, \dots, H_n\}$ be an essential affine hyperplane arrangement 
in $\R^\ell$. If 
(\ref{inequality}) holds as equality for some $0\leq k\leq  \ell$, then $\LL$ is the constant sheaf.
\end{theorem}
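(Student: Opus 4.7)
The plan is to exploit a chamber-based cochain complex that computes the twisted cohomology $H^*(M(\A),\LL)$ for real arrangements (the description alluded to in the abstract, due in part to the second author and collaborators). For a generic linear form $\xi$ on $\R^\ell$, the chambers $\ch(\A)$ admit a Morse-type partition into subsets of index $0,1,\dots,\ell$, with the piece of index $k$ having cardinality $b_k(M(\A))$ and in particular with the top piece coinciding with $\bch(\A)$. This gives a cochain complex $(C^\bullet,d)$ with $\dim_\K C^k = b_k(M(\A))$ whose cohomology is $H^*(M(\A),\LL)$. The matrix entries of $d^k$ are explicit expressions in $q_1,\dots,q_n$ that collapse to $0$ when $q=(1,\dots,1)$, recovering the identity $\dim H^k(M(\A),\K)=b_k(M(\A))$ for the constant sheaf and producing the upper bound (\ref{inequality}) in general.

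Because $\dim C^k = b_k(M(\A))$, the hypothesis $\dim H^k(M(\A),\LL)=b_k(M(\A))$ forces both $d^{k-1}$ and $d^k$ to be identically zero (with the convention that $d^{-1}$ and $d^\ell$ are automatically zero, so the extremal cases $k=0$ and $k=\ell$ each involve only one nontrivial differential). The theorem is thereby reduced to the algebraic/combinatorial assertion: the joint vanishing $d^{k-1}(q)=d^k(q)=0$ implies $q_1=\cdots=q_n=1$. The extremal case $k=0$ can, if needed, be dispatched independently by the identification $H^0(M(\A),\LL)=\LL^{\pi_1(M(\A))}$, which is nonzero exactly when $\rho$ is trivial.

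To carry out the combinatorial reduction, I would examine the entries of $d^k$ attached to a pair of chambers $C\in\ch^k$ and $C'\in\ch^{k+1}$ that are adjacent across a single wall supported on a hyperplane $H_j$. Such an entry is (up to sign and a monomial unit) proportional to a factor of the form $1-q_j^{\pm 1}$, so its vanishing yields $q_j=1$. To conclude that \emph{every} $q_j$ equals $1$, it therefore suffices to verify that for each $H_j\in\A$ there exists some such simple-wall adjacency between an index-$k$ and an index-$(k{\pm}1)$ chamber whose corresponding entry in $d^{k-1}$ or $d^k$ is nontrivial.

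The main obstacle will be this last coverage step: proving that the bipartite wall-adjacency between $\ch^k$ and $\ch^{k\pm 1}$ hits every hyperplane of $\A$. For intermediate $k$ this should follow from the essentialness of $\A$ combined with a careful selection of the generic flag $\xi$, but there is a real danger that some hyperplane lies ``deep inside'' the arrangement relative to $\xi$ and fails to appear as a simple wall of any adjacent pair of the right indices. When this happens I would either (i) vary $\xi$ and average over the resulting constraints, (ii) apply deletion/restriction to a subarrangement $\A''=\A^{H_j}$ where the uncovered hyperplane becomes visible, running an induction on $|\A|$ or $\ell$, or (iii) transport the problem to the dual degree $2\ell-k$ via a Poincar\'e--Lefschetz type identification $H^k(M(\A),\LL)\simeq H^{2\ell-k}_c(M(\A),\LL^{-1})^\vee$ so that the missing hyperplane is picked up by the dual chamber adjacency. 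Combining these reductions, every $q_j$ is pinned to $1$ and $\LL$ is the constant sheaf.
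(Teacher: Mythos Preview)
Your setup is correct: the chamber complex $(\K[\ch^\bullet(\A)],\nabla_\LL)$ has $\dim C^k=b_k(M(\A))$, so equality in (\ref{inequality}) forces $\nabla_\LL^{k-1}=\nabla_\LL^{k}=0$. The gap is in what you extract from this vanishing. Your plan is to find, for each $H_j$, a pair $C\in\ch^k$, $C'\in\ch^{k\pm1}$ with $\Sep(C,C')=\{H_j\}$ and nonzero degree, so that the entry $\deg(C,C')\,\Delta(C,C')=\pm(q_j^{1/2}-q_j^{-1/2})$ forces $q_j=1$. You correctly flag the ``coverage'' step as the obstacle, and indeed it is a real one: there is no reason every hyperplane should appear as a single separating wall between chambers of the prescribed Morse indices, and you also need $\deg(C,C')\neq 0$, which is itself nontrivial (degrees in this complex are not simply $\pm1$ for facet-adjacent chambers in general). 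Your fallback (ii) is circular as stated: Cohen's deletion--restriction long exact sequence for $(\LL,\LL',\LL'')$ requires the monodromy of $\LL$ around the distinguished hyperplane to be trivial, which is precisely the conclusion $q_j=1$ you are trying to reach. Fallbacks (i) and (iii) are too vague to close the gap.

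The paper's argument avoids this entirely by \emph{not} trying to pin down individual $q_j$'s from the chamber complex. First it reduces to $k=\ell$ by slicing with $F^k$. Then, instead of a single-wall pair, it uses an \emph{opposite-chamber} pair $C\in\bch^{\ell-1}(\A)$, $C^\vee\in\uch^\ell(\A)$ with $\dim X(C)=\ell-1$; for such a pair one knows $\deg(C,C^\vee)=1$ and $\Sep(C,C^\vee)=\A$ (Propositions~\ref{prop:charsep} and~\ref{prop:degree}). The vanishing of that single entry gives only the product relation $q_1\cdots q_n=1$, i.e.\ $q_\infty=1$. This is exactly what is needed to legitimately invoke Cohen's triple $(\A,\A',\A'')$ with distinguished hyperplane $\barH_\infty$: Lemma~\ref{lem 2} then propagates the equality hypothesis to $\LL'$ on $M'$ and $\LL''$ on $M''$, and an induction on $\ell$ and $n$ finishes. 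The moral: you should aim to extract from $\nabla_\LL^{\ell-1}=0$ one guaranteed relation (the product), not $n$ separate ones, and feed that relation into deletion--restriction.
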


Our proof for Theorem \ref{thm main 1} relies on several works (\cite{y-lef, y-cham, y-mini}) 
concerning minimality of arrangements due to the second named author and the  deletion-restriction method for certain non-trivial rank one local systems discovered by Cohen \cite[Theorem 4]{Cohen02}.
Theorem \ref{thm main 1} leads to the following two questions.

\begin{question} \label{que 1} Does Theorem \ref{thm main 1} hold for complex hyperplane arrangement?
\end{question}
\begin{question} \label{que 2} Let $\LL$ be a rank $r$ local system with $\K$-coefficients on $M(\A)$. Due to the minimality of $M(\A)$, we have 
 $$ \dim H^k(M(\A),\LL) \leq r \cdot b_k(M(\A)) .$$ 
 If this inequality holds as equality for some   $0\leq k\leq  \ell$, does $\LL$ have to be the constant sheaf?
\end{question}
Clearly the answers are yes for both questions when $k=0$, then the same answers hold when $k=1$. So both questions are only open for $k\geq 2$.  One may reduce the study of Question \ref{que 2} to simple local systems.

Let $X$ be a minimal CW-complex homotopy equivalent to $M(\A)$. 
Note that  $H_1(X,\Z) \cong H_1(M(\A), \Z)\cong \Z^n$. Denote $\tilde{X}$ the maximal abelian covering of $X$. The cell structure of $X$ lifts to cell structures on  $\tilde{X}$ with the actions of  $\Z^n$ by deck transformations. 
The cellular chain complex of $\tilde{X}$ with $\K$-coefficients is a bounded complex of finitely generated free $\K[\Z^n]$-modules.
Consider its dual complex $C^{*}(\tilde{X}; \K)$:
\begin{equation}\label{cochain}
\cdots  \to C^{k-1}(\tilde{X}, \K) \overset{\partial_{k-1}}{\to} C^k(\tilde{X}, \K) \overset{\partial_{k}}{\to} C^{k+1}(\tilde{X}, \K)  \to   \cdots,
\end{equation} 
where $ C^{k}(\tilde{X}, \K)=\Hom(C_{k}(\tilde{X}, \K),\K[\Z^n])$ as $\K[\Z^n]$-modules. Note that $\K[\Z^n]\cong \K[t_1^{\pm 1},\cdots, t_n^{\pm 1}]$. Then the following complex of $\K$-vector spaces $$C^{*}(\tilde{X}; \K)\otimes_{\K[\Z^n]} \K[t_1^{\pm 1},\cdots, t_n^{\pm 1}]/(t_1-q_1,\cdots,t_n-q_n)$$  computes $H^*(M(\A),\LL)$. We will show in Remark \ref{rem nonzero} that if  $\LL$ is not the constant sheaf and $\A$ is real,  $\partial_k(q) \neq 0$ for all $0\leq k\leq \ell -1$. Since $ C^{k}(\tilde{X}, \K)$ is a free  $\K[\Z^n]$-module with rank being $b_k(M(\A))$, this implies that in this case, 
\begin{center}
    $ \dim H^k(M(\A),\LL)\leq b_k(M(\A))-2$ for any $1\leq k \leq \ell-1$.
\end{center}

On the other hand, denote the dual basis of $H_1(M(\A),\Z)\cong \Z^n$ by \{$e_1, \dots, e_n\}$ with $e_i \in H^1(M(\A), \Z)$. 
 Consider  $w=\sum_{i=1}^n w_i e_i \in H^1(M(\A),\K)$ with $w_i \in \K$ for all $i$. Note that $w\cup w=0,$ hence  we get the following Aomoto complex by cup product $(H^{\ast}(M(\A),\K), w\cup)$:
$$\cdots \to  H^{k-1}(M(\A),\K)\overset{ w\cup}{\to} H^{k}(M(\A),\K) \overset{ w\cup}{\to} H^{k+1}(M(\A),\K)\to\cdots
$$
Papadima-Suciu showed in \cite[Theorem 12.6]{ps} that the linearization of the
cochain complex $C^{*}(\tilde{X}; \K)$ coincides with the so-called universal Aomoto complex. See loc. cit. for more details. 
Then the following proposition gives some evidence to Question \ref{que 1}.
\begin{proposition} \label{prop cup product} Let $\A$ be an essential hyperplane arrangement in $\C^\ell$. 
    For any non-zero $w\in H^1(M(\A),\K)$,  the cup product map $  H^{k-1}(M(\A),\K)\overset{ w\cup}{\to} H^{k}(M(\A),\K)$ is non-zero for any $1\leq k\leq \ell$.
\end{proposition}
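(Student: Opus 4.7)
Under the Orlik--Solomon isomorphism $H^*(M(\A),\K)\cong A^*(\A)$, cup product by $w=\sum_i w_i e_i\in A^1(\A)$ becomes multiplication in the OS algebra. My plan is to exhibit, for each $1\le k\le\ell$, an explicit $\alpha\in A^{k-1}$ with $w\alpha\ne 0$ in $A^k$. Fix $j_0$ with $w_{j_0}\ne 0$. Since $\A$ is essential of rank $\ell\ge k$, matroid exchange extends $\{H_{j_0}\}$ to an independent $k$-subset $\{H_{j_0},H_{i_1},\dots,H_{i_{k-1}}\}$; set $\alpha=e_{i_1}\cdots e_{i_{k-1}}$, $Y=H_{i_1}\cap\cdots\cap H_{i_{k-1}}$ (codim $k-1$), and $X=H_{j_0}\cap Y$ (codim $k$).

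Next I project $w\alpha=\sum_j w_j e_j\alpha\in A^k(\A)$ onto the Brieskorn summand $A^k(\A_X)$ of the decomposition $A^k(\A)=\bigoplus_Z A^k(\A_Z)$. Terms with $H_j\supset Y$ vanish (the set $\{H_j,H_{i_1},\ldots,H_{i_{k-1}}\}$ is then dependent, so the monomial lies in the OS ideal), while terms with $H_j\not\supset Y$ are supported on $A^k(\A_{H_j\cap Y})$. Hence the $X$-component is
\[
\sigma \;=\; \sum_{j\in J} w_j\, e_j e_{i_1}\cdots e_{i_{k-1}} \;\in\; A^k(\A_X), \qquad J=\{j: H_j\supset X,\ H_j\not\supset Y\}\ni j_0.
\]
To conclude, I pair $\sigma$ with the meridian-torus class $[T]\in H_k(M(\A_X);\K)$ corresponding to $I=\{j_0,i_1,\dots,i_{k-1}\}$, realized by $\gamma_{j_0}\times\gamma_{i_1}\times\cdots\times\gamma_{i_{k-1}}$. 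Under the standard residue pairing $\langle \bar e_{J'},[T]\rangle=\pm\delta_{I,J'}$ (over $\C$, $(2\pi i)^{-k}\int_T d\log\alpha_{j_1'}\wedge\cdots\wedge d\log\alpha_{j_k'}$; the same combinatorial identity persists over any $\K$), only the $j=j_0$ summand contributes, so $\langle\sigma,[T]\rangle=\pm w_{j_0}\ne 0$ and therefore $w\alpha\ne 0$.

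The main obstacle is precisely this last pairing: when $\A_X$ contains hyperplanes beyond the chosen $k$, the naive torus $T$ need not literally lie in $M(\A_X)$ without perturbation, and the monomials $\bar e_{J'}$ arising for $j\in J\setminus\{j_0\}$ could share Orlik--Solomon relations with $\bar e_I$ inside $A^k(\A_X)$. I anticipate handling this by ordering the hyperplanes so that $I$ is an nbc basis element of $A^k(\A_X)$ and taking $[T]$ to be the corresponding element of the dual nbc basis of $H_k(M(\A_X);\K)$; after nbc reduction, the competing monomials express as combinations of basis elements distinct from $\bar e_I$, so no cancellation can destroy the $\pm w_{j_0}$ coefficient and $\sigma\ne 0$ as required.
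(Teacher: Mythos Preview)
Your route through the Orlik--Solomon algebra and the Brieskorn decomposition is quite different from the paper's, which argues by induction on $\ell$: in the central case it invokes exactness of the Aomoto complex when $\sum_i w_i\ne 0$, and when $\sum_i w_i=0$ it uses $M(\A)\simeq U(\A)\times\C^*$ together with K\"unneth to drop the dimension; the non-central case is then reduced to the central one by localizing at a $0$-dimensional flat lying on $H_{j_0}$ and using functoriality. Your argument, once repaired, is more direct and purely combinatorial.

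The reduction to the element $\sigma\in A^k(\A_X)$ is correct, but the anticipated nbc argument has a genuine gap: merely ordering so that $I$ is nbc does \emph{not} guarantee that the competing monomials avoid $e_I$ after reduction. For three concurrent lines in $\C^2$ with the order $j_0=1$, $i_1=2$, $3$, the set $I=\{1,2\}$ is nbc, yet the circuit relation gives $e_3e_2=e_{12}-e_{13}$, so the $e_I$-coefficient of $\sigma$ is $w_1+w_3$, which may vanish even though $w_1\ne 0$. The remedy is to choose the order on $\A_X$ with $i_1,\dots,i_{k-1}$ as the \emph{smallest} elements. Then for every $j\in J$ the set $\{i_1,\dots,i_{k-1},j\}$ is already nbc: a broken circuit $C\setminus\{\min C\}$ contained in it would force $\min C\notin\{i_1,\dots,i_{k-1},j\}$ (else $C$ sits in an independent set), hence $\min C\ge k$, so no element of $C\setminus\{\min C\}$ can lie among $i_1,\dots,i_{k-1}$, leaving $C=\{\min C,j\}$, impossible in a simple central arrangement. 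Thus the $e_j\alpha$ for $j\in J$ are, up to sign, \emph{distinct} nbc basis monomials with no reduction required, and the coefficient of $e_I$ in $\sigma$ is exactly $\pm w_{j_0}\ne 0$. The detour through homology pairings is then unnecessary.
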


This paper is organized as follows. In Section \ref{sec:notation}, we review basic notations of real arrangements
and the methods to calculate local system cohomology in terms of chamber basis. 
Then we prove Theorem \ref{thm main 1} in Section \ref{sec:thm} and Proposition \ref{prop cup product} in Section \ref{sec:prop}.

\section{Notations and Preliminaries for real hyperplane arrangements} 

\label{sec:notation}

\subsection{Chambers on real arrangements}

\label{subsec:chambers}
Let $\A=\{H_1,\hdots,H_n\}$ be an affine 
hyperplane arrangement in $\R^\ell$. 
Denote by $M(\A)=\C^\ell \smallsetminus \cup_{i=1}^n H_i \otimes \C$ 
the complement of the complexified hyperplanes.
By identifying $\R^\ell$ with $\PP_{\R}^\ell\smallsetminus\barH_\infty$, 
define the projective closure by 
$\barA=\{\barH_1,\hdots,\barH_n,\barH_\infty\}$, where 
$\barH_i\subset\PP_{\R}^\ell$ is the closure of $H_i$ in the projective space. 
Let $L(\A)$ and $L(\barA)$ denote the intersection posets of 
$\A$ and $\barA$, respectively, namely, the poset of subspaces 
obtained as intersections of some hyperplanes with reverse inclusion order. 
An element of $L(\A)$ (and $L(\barA)$) is  called an edge. 
Denote by $L_k(\A)$ the set of all $k$-dimensional edges. 
For example $L_{\ell}(\A)=\{\R^\ell\}$ and $L_{\ell-1}(\A)=\A$. 
Then $\A$ is essential if and only if $L_0(\A)\neq\emptyset$.

Next we recall the description of the minimal complex 
in terms of real structures from \cite{y-lef, y-mini, y-cham}. 
Let $\A=\{H_1, \dots, H_n\}$ be an essential hyperplane arrangement in $\R^\ell$. 
A connected component of $\R^\ell\smallsetminus\bigcup_{i=1}^n H_i$ is called a 
chamber, and denote by $\ch(\A)$ the set of all chambers. 
Let $\bch(\A)$ (resp. $\uch(\A)$) denote the set of all bounded (resp. unbounded) chambers.
For a chamber $C\in\ch(\A)$, 
denote by $\barC$ the closure of $C$ in $\PP_{\R}^\ell$.
Then 
a chamber $C$ is bounded if and only if $\barC\cap\barH_\infty=\emptyset$. 
\begin{definition}
    For given two chambers $C, C'\in\ch(\A)$, the set  $\Sep(C, C')$ of  separating hyperplanes of $C$ and $C'$ is defined as
\[
\Sep(C, C'):=\{H_i\in\A\mid \mbox{ $H_i$ separates $C$ and $C'$}\}.
\]
\end{definition}

Let $\FF$ be a generic flag in $\R^\ell$, that is, $\FF$ is a sequence of affine subspaces:
\[
\FF: \emptyset=F^{-1}\subset F^0\subset F^1\subset\dots\subset F^{\ell}
=\R^\ell,
\]
with 
$\dim(\barX\cap\barF^k)=\dim \barX+k-\ell$ for any 
$\barX\in L(\barA)$. 
From this point, we always assume that the generic flag $\FF$ is 
near to $\barH_\infty$, that is, $F^{k-1}$ does not separate $L_0(\A\cap F^{k})$ for all 
$k=1, \dots, \ell$.  
Denote
\[
\begin{split}
\ch^k(\A)
&=
\{C\in\ch(\A)\mid C\cap F^k\neq\emptyset, C\cap F^{k-1}=\emptyset\}\\
\bch^k(\A)
&=
\{C\in\ch^k(\A)\mid C\cap F^k\mbox{ is bounded}\}\\
\uch^k(\A)
&=
\{C\in\ch^k(\A)\mid C\cap F^k\mbox{ is unbounded}\}.\\
\end{split}
\]
Then clearly, we have 
$\ch^k(\A)=\bch^k(\A)\sqcup\uch^k(\A)$ and $ 
\ch(\A)=\bigsqcup_{k=0}^\ell\ch^k(\A).$
Since we assumed that $\FF$ is 
near to $\barH_\infty$, we have $\bch^\ell(\A)=\bch(\A)$. However, for any $0\leq k<\ell$, 
$C\in\bch^k(\A)$ is an unbounded chamber. 

\begin{proposition}
\label{prop:bchuch}
(\cite{y-lef, y-mini}) With the above notations, we have
\begin{itemize}
\item[$(i)$] $\#\ch^k(\A)=b_k(M(\A))$. 
\item[$(ii)$] $\#\bch^k(\A)=\#\uch^{k+1}(\A)$. 
\end{itemize}
\end{proposition}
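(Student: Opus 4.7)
The plan is to deduce both parts from Zaslavsky's formulas for counting chambers, combined with the Brieskorn--Orlik--Solomon computation of Betti numbers, applied to the generic restrictions $\A\cap F^k$. The key observation, used repeatedly, is that because $\FF$ is in general position, the map $C\mapsto C\cap F^k$ is a bijection from $\{C\in\ch(\A):C\cap F^k\neq\emptyset\}$ onto $\ch(\A\cap F^k)$.

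For part $(i)$, this bijection immediately gives $\#\ch(\A\cap F^k)=\sum_{j=0}^{k}\#\ch^j(\A)$. Genericity of $F^k$ also identifies $L(\A\cap F^k)$ with the codimension-$\leq k$ part of $L(\A)$ via $X\mapsto X\cap F^k$, preserving Möbius numbers, so Zaslavsky together with Brieskorn--Orlik--Solomon ($b_j(M(\A))=\sum_{X:\codim X=j}|\mu(\hat 0,X)|$) yields $\#\ch(\A\cap F^k)=\sum_{j=0}^{k}b_j(M(\A))$. Subtracting the identity at level $k-1$ from that at level $k$ telescopes to $\#\ch^k(\A)=b_k(M(\A))$.

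For part $(ii)$, I first show $\#\bch^k(\A)=\#\bch(\A\cap F^k)$ for every $k$. The inclusion $\bch(\A\cap F^k)\subset\bch^k(\A)$ under the above bijection is immediate from the definition. For the reverse, the near-$\barH_\infty$ hypothesis enters: a bounded chamber of $\A\cap F^k$ is the convex hull of its vertices, which are zero-dimensional edges of $\A\cap F^k$, and by hypothesis these all lie on one side of $F^{k-1}$; hence the chamber itself misses $F^{k-1}$, forcing the corresponding chamber of $\A$ into $\ch^k(\A)$. Combined with part $(i)$, the identity in $(ii)$ rewrites as
\[
\#\bch(\A\cap F^k)+\#\bch(\A\cap F^{k+1})=b_{k+1}(M(\A)).
\]
By Zaslavsky, $\#\bch(\A\cap F^j)=(-1)^{j}\chi_{\A\cap F^j}(1)$, and genericity gives $\chi_{\A\cap F^j}(1)=\sum_{X\in L(\A),\,\codim X\leq j}\mu(\hat 0,X)$, so
\[
(-1)^{k}\chi_{\A\cap F^k}(1)+(-1)^{k+1}\chi_{\A\cap F^{k+1}}(1)=(-1)^{k+1}\sum_{\codim X=k+1}\mu(\hat 0,X),
\]
which by the sign rule $\mu(\hat 0,X)=(-1)^{\codim X}|\mu(\hat 0,X)|$ for geometric lattices and Brieskorn--Orlik--Solomon equals $b_{k+1}(M(\A))$.

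The main obstacle is the identification $\#\bch^k(\A)=\#\bch(\A\cap F^k)$: this is the only place where the near-$\barH_\infty$ hypothesis is used in an essential way, and it relies on showing that the vertices of a bounded chamber of $\A\cap F^k$ all lie on one side of $F^{k-1}$ so that the chamber itself avoids $F^{k-1}$. Once this step is in hand, the rest of $(ii)$ is a short computation with Möbius numbers and signs.
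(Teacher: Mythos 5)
Your proposal is correct, but for part $(ii)$ it takes a genuinely different route from the paper. For part $(i)$ you reproduce the standard argument of the cited references: the generic section $F^k$ induces a bijection between chambers meeting $F^k$ and $\ch(\A\cap F^k)$, a M\"obius-preserving identification of $L(\A\cap F^k)$ with the codimension-$\leq k$ part of $L(\A)$, and then Zaslavsky plus Orlik--Solomon telescope to give $\#\ch^k(\A)=b_k(M(\A))$; this is exactly how \cite{y-lef} proceeds. For part $(ii)$, however, the paper does not count: it exhibits the explicit bijection $\iota\colon\bch^k(\A)\to\uch^{k+1}(\A)$, $C\mapsto C^\lor$, given by passing to the opposite chamber through $\barC\cap\barH_\infty$ (this is the content of the displayed map right after the proposition, and it is the reason the "near to $\barH_\infty$" hypothesis is imposed). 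Your argument instead converts $(ii)$, via $(i)$, into the identity $\#\bch(\A\cap F^k)+\#\bch(\A\cap F^{k+1})=b_{k+1}(M(\A))$ and verifies it with Zaslavsky's bounded-chamber formula and the sign rule for $\mu$. Both are valid; your key lemma $\#\bch^k(\A)=\#\bch(\A\cap F^k)$ is correctly reduced to the convex-hull-of-vertices argument using the near-infinity hypothesis (note only that you have swapped the labels of the two inclusions: the direction that is "immediate from the definition" is $\bch^k(\A)\hookrightarrow\bch(\A\cap F^k)$, and the argument you give "for the reverse" is precisely what is needed for the other containment). What the counting proof loses is the explicit pairing $C\leftrightarrow C^\lor$: the paper needs not just the equality of cardinalities but the bijection itself, since Propositions \ref{prop:charsep} and \ref{prop:degree} compute $\Sep(C,C^\lor)$ and $\deg(C,C^\lor)$ for exactly these pairs, and the proof of Theorem \ref{thm main 1} hinges on evaluating $\Delta(C,C^\lor)$ for a chamber with $\dim X(C)=\ell-1$. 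What your proof buys is independence from the opposite-chamber construction, at the cost of being purely enumerative. Also note that $(ii)$ should be read for $0\leq k\leq\ell-1$ (for $k=\ell$ the right-hand side is vacuously empty), which is implicit in both arguments.
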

Concerning $(ii)$ of Proposition \ref{prop:bchuch}, there is an operation called taking the opposite chamber as follows.

\begin{definition}(\cite[Definition 2.1]{y-mini})
Let $C\in \uch(\A).$ 
There exists a unique chamber, denoted by $C^{\vee}\in\uch(\A)$, 
which is the opposite with respect to $\overline{C}\cap \barH_\infty,$ 
where $\overline{C}$ is the closure of $C$ in the projective space 
$\PP_{\R}^\ell$. 
\end{definition}
Since $(C^\vee)^\vee=C$, we obtain a bijection \[
\iota: \bch^k(\A)\stackrel{\simeq}{\longrightarrow}\uch^{k+1}(\A), 
C\longmapsto C^\lor, 
\]
which implies Proposition \ref{prop:bchuch}$(ii)$. See Figure \ref{fig:opposite} for an example.
\begin{figure}[htbp]
\centering
\begin{tikzpicture}[scale=1]


\draw[thick,rounded corners=0.6cm] (1,1) -- (8,1) -- (8,4.5) node[right] {$\overline{H}_\infty$};

\draw[thick] (1,2.5) node [left] {$H_1$} --(8.5,3.5);
\draw[thick] (1,3.5) node [left] {$H_2$} --(8.5,1.5);

\draw[thick,rounded corners=0.3cm] (5,0.5) -- (3,1.5) -- (3,4.5) node [above] {$H_3$}; 
\draw[thick,rounded corners=0.3cm] (4,0) -- (4,4.5) node [above] {$H_4$}; 
\draw[thick,rounded corners=0.3cm] (3,0.5) -- (5,1.5) -- (5,4.5) node [above] {$H_5$}; 

\filldraw[fill=black, draw=black] (4,1) circle (2pt);

\draw (2,1.5) node[above] {$C_1$};
\draw (3.5,1.5) node[above] {$C_2$};
\draw (4.5,1.5) node[above] {$C_3$};
\draw (6,1.5) node[above] {$C_4$};

\draw (2,3.5) node[above] {$C_4^\lor$};
\draw (3.5,3.5) node[above] {$C_2^\lor$};
\draw (4.5,3.5) node[above] {$C_3^\lor$};
\draw (6,3.5) node[above] {$C_1^\lor$};

\draw (2,0) node[above] {$C_1^\lor$};
\draw (3.5,0) node[above] {$C_3^\lor$};
\draw (4.5,0) node[above] {$C_2^\lor$};
\draw (6,0) node[above] {$C_4^\lor$};

\draw[very thick] (1,1) node[above] {\footnotesize $\overline{C_1}\cap\overline{H}_\infty$} --(4,1);

\end{tikzpicture}
\caption{Opposite chambers}
\label{fig:opposite}
\end{figure}


\begin{proposition} 
\label{prop:charsep} For $C\in \uch(\A),$ let us denote the projective subspace generated by $\barC\cap\barH_\infty$ 
by $X(C)=\langle\barC\cap\barH_\infty\rangle$. Then
we have
\begin{equation}
\label{eq:charsep}
\Sep(C, C^\lor)=\{H\in\A\mid \barH\not\supset X(C)\}=
\barA\smallsetminus\barA_{X(C)}. 
\end{equation}
In particular, if $\dim X(C)=\ell-1$, then $\Sep(C, C^\lor)=\A$. 
\end{proposition}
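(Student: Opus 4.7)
The plan is to construct, for any $C \in \uch(\A)$, a concrete affine line from $C$ through infinity to $C^\lor$, and then read off $\Sep(C, C^\lor)$ from which hyperplanes the line crosses. Everything hinges on picking the direction at infinity correctly: a generic point of $\barC \cap \barH_\infty$ will lie on $\barH$ precisely when $\barH$ contains the whole projective subspace $X(C)$.

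First, I would choose a point $[v_0]$ in the relative interior of $\barC \cap \barH_\infty$ (relative to its affine hull $X(C)$) that avoids every $\barH \in \barA \smallsetminus \barA_{X(C)}$. Such a $[v_0]$ exists: each $\barH$ with $\barH \not\supseteq X(C)$ meets $X(C)$ in a proper projective subspace, and a finite union of proper subspaces cannot cover the non-empty open subset of $X(C)$ given by the relative interior of $\barC \cap \barH_\infty$. By construction, $[v_0] \in \barH$ holds if and only if $\barH \in \barA_{X(C)}$.

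Next, lift $[v_0]$ to a vector $v_0 \in \R^\ell$ in the relative interior of the recession cone of $C$, pick $p \in C$, and consider the line $L = \{p + t v_0 \mid t \in \R\}$. From $v_0$ being a recession direction and $p \in C$ one gets $p + t v_0 \in C$ for every $t \geq 0$. The core claim is that $p + t v_0 \in C^\lor$ for all sufficiently negative $t$. I would justify this by localizing near $[v_0] \in \PP_\R^\ell$: by the first step the only hyperplanes of $\barA$ passing through $[v_0]$ are those in $\barA_{X(C)}$, and each such $\barH \neq \barH_\infty$ contains the face $\barC \cap \barH_\infty$ shared by $\barC$ and $\barC^\lor$, so it does not separate $C$ from $C^\lor$. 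Consequently the only two chambers adjacent to $[v_0]$ from opposite sides of $\barH_\infty$ are $\barC$ and $\barC^\lor$, and the projective closure of $L$ enters these two chambers from the $+v_0$ and $-v_0$ sides of $[v_0]$ respectively.

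Finally, a line in $\R^\ell$ meets an affine hyperplane $H$ in at most one point, and does so iff it is not parallel to $H$, iff $[v_0] \notin \barH$. Since $L$ starts in $C$ for $t \gg 0$ and ends in $C^\lor$ for $t \ll 0$, we have $H \in \Sep(C, C^\lor)$ iff $L$ crosses $H$, iff $[v_0] \notin \barH$, iff $\barH \not\supseteq X(C)$ by the first step. The ``in particular'' assertion follows because, if $\dim X(C) = \ell - 1$, then $X(C) \subseteq \barH_\infty$ forces $X(C) = \barH_\infty$, so $\barA_{X(C)} = \{\barH_\infty\}$ and $\Sep(C, C^\lor) = \A$. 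The main obstacle is the middle step---verifying that the $t \to -\infty$ end of the line really is $C^\lor$---which requires combining the defining property of the opposite chamber with the local structure of $\barA$ at $[v_0]$.
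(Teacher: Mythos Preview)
Your proof is correct. The paper itself states this proposition without proof (it is treated as an elementary consequence of the definition of $C^\lor$, in the spirit of the references \cite{y-lef,y-mini}), so there is no argument in the paper to compare against. Your approach---choosing a generic direction $[v_0]$ in the relative interior of $\barC\cap\barH_\infty$, running the affine line $L=\{p+tv_0\}$, and reading off $\Sep(C,C^\lor)$ from which hyperplanes $L$ crosses---is exactly the natural one.

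A small remark on the step you flag as the ``main obstacle'': your justification that the $t\to -\infty$ end of $L$ lands in $C^\lor$ is correct, but the sentence ``the only two chambers adjacent to $[v_0]$ from opposite sides of $\barH_\infty$ are $\barC$ and $\barC^\lor$'' is slightly imprecise as written, since when $\barA_{X(C)}\supsetneq\{\barH_\infty\}$ there are more than two local chambers at $[v_0]$. The clean statement is that the face $F=\barC\cap\barH_\infty$ has $[v_0]$ in its relative interior, the hyperplanes of $\barA$ through $[v_0]$ are exactly those of $\barA_F=\barA_{X(C)}$, and in the localized central arrangement $\barA_F$ the chambers $\barC$ and $\barC^\lor$ correspond to \emph{antipodal} cones (this is precisely what ``opposite with respect to $\barC\cap\barH_\infty$'' means). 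Since the projective line $\bar L$ passes through $[v_0]$, it goes from one local cone to its antipode, hence from $\barC$ to $\barC^\lor$. This also explains the degenerate case $C^\lor=C$ (e.g.\ a strip between two parallel lines in $\R^2$), where $\barC$ itself occupies both antipodal local cones and $\Sep(C,C^\lor)=\emptyset$, consistent with your formula.
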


Next we define the degree map 
\[
\deg:\ch^k(\A)\times\ch^{k+1}(\A)\longrightarrow\Z. 
\]
Let $B=B^k\subset F^k$ be a 
$k$-dimensional ball with sufficiently large radius so that 
every $0$-dimensional edge $X\in L_0(\A\cap F^k)\simeq L_{\ell-k}(\A)$ 
is contained in the interior of $B^k$. 
Let $C\in\ch^k(\A)$ and $C'\in\ch^{k+1}(\A)$. 
Then there exists a vector field $U^{C'}$ on $F^k$ 
(\cite{y-lef}) which satisfies the following conditions. 
\begin{itemize}
\item 
$U^{C'}(x)\neq 0$ for $x\in\partial (\barC\cap B)$. 
\item 
Let $x\in\partial B$. Then 
$U^{C'}(x)$ directs to the inside of $B$. 
\item 
If $x\in H$ for some $H\in\A$, then 
$U^{C'}(x)\not\in T_x H$ and  directs the side 
containing $C'$. 
\end{itemize}

\begin{definition}
\label{degreemap}
For any $C\in\ch^k(\A)$ and 
$C'\in\ch^{k+1}(\A)$,   the degree of $C$ and $C'$ is defined by 
\[
\deg(C,C'):=\deg\left(\left.
\frac{U^{C'}}{|U^{C'}|}\right|_{\partial (\barC\cap B^k)}:
\partial(\barC\cap B^k)\longrightarrow S^{k-1}\right)\in\Z. 
\]
This is independent of the choice of $U^{C'}$ (\cite{y-lef}). 
\end{definition}

It is difficult to calculate degrees in general. However, for the opposite chambers, one can compute it as follows due to  Bailet and the second named author. 
\begin{proposition}\cite[Theorem 4.8]{BY}
    \label{prop:degree}
Let $C\in\bch^{\ell-1}(\A)$. Then 
\begin{equation}
\deg(C, C^\lor)=(-1)^{\ell-1-\dim X(C)}. 
\end{equation}
\end{proposition}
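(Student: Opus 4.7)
The plan is to compute $\deg(C,C^\lor)$ via the Poincar\'e--Hopf index theorem. Since $C\in\bch^{\ell-1}(\A)$, the set $P:=\barC\cap B^{\ell-1}$ is a compact convex $(\ell-1)$-ball in $F^{\ell-1}$ with boundary $\partial P\cong S^{\ell-2}$, and the degree of $U^{C^\lor}/|U^{C^\lor}|:\partial P\to S^{\ell-2}$ equals the sum of local indices at zeros of any smooth extension of $U^{C^\lor}$ into $\operatorname{int}(P)$. I would therefore construct an explicit model vector field with a single non-degenerate zero whose index can be read off as $(-1)^{\ell-1-\dim X(C)}$.

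By Proposition~\ref{prop:charsep} combined with the ``directs the side of $C^\lor$'' condition, the facets of $P$ split into two classes: separating facets on hyperplanes $H\in\A\setminus\A_{X(C)}$, where $U^{C^\lor}$ points outward from $P$, and non-separating facets on $H\in\A_{X(C)}$, where it points inward (since $C$ and $C^\lor$ lie on the same side of such $H$). Let $W\subset\R^\ell$ be the linear subspace corresponding to $X(C)$, of dimension $d+1$ with $d:=\dim X(C)$; the non-separating hyperplanes are exactly those with normal in $W^\perp$. By genericity of $F^{\ell-1}$, the intersection $W\cap TF^{\ell-1}$ has dimension $d$, and I pick linear coordinates $(y_1,\dots,y_d,z_1,\dots,z_{\ell-1-d})$ on $F^{\ell-1}$ adapted to the splitting $TF^{\ell-1}=Y\oplus Z$ with $Y:=W\cap TF^{\ell-1}$.

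Fix an interior point $(y^0,z^0)\in\operatorname{int}(P)$ and, for a parameter $\epsilon>0$ to be determined, define
\[
V(y,z):=\bigl(y-y^0,\;\epsilon(z^0-z)\bigr).
\]
Then $V$ has a unique non-degenerate zero at $(y^0,z^0)$, with Jacobian $DV$ diagonal with $d$ entries $+1$ and $\ell-1-d$ entries $-\epsilon$, so the local index equals $(-1)^{\ell-1-d}$, matching the predicted formula. On non-separating facets (normal $n_H\in Z$), the pairing $n_H\cdot V=\epsilon\,n_z\cdot(z^0-z)$ is constant on $H\cap F^{\ell-1}$ with sign determined by $(y^0,z^0)\in C$; this matches the $C^\lor=C$-side, so $V$ points inward into $P$ as required. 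On separating facets the $Y$-component $V_y=y-y^0$ provides the required outward push toward $C^\lor$: since $(y^0,z^0)$ lies on the $C$-side of $H$, and the facet lies across $H$ on the $C^\lor$-side, the vector $V_y$ at facet points $y$ has the correct sign to cross $H$ toward $C^\lor$.

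The principal technical obstacle is ensuring that $n_H\cdot V$ retains the correct sign uniformly on each separating facet, since $y-y^0$ can in principle vanish along a facet and the $Z$-contribution $\epsilon\,n_z\cdot(z^0-z)$ could then flip the total sign. This is resolved either by choosing $y^0$ off the $y$-projection of every separating facet (a genericity condition compatible with $(y^0,z^0)\in\operatorname{int}(P)$) together with $\epsilon>0$ small enough that the dominant $Y$-term controls the sign on each compact facet, or, following \cite{BY}, by a preliminary projective change of coordinates bringing the arrangement into a product form adapted to $X(C)$ in which separating normals become purely $Y$-directional. Once this verification is in place, Poincar\'e--Hopf yields $\deg(C,C^\lor)=(-1)^{\ell-1-\dim X(C)}$.
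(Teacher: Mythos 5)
The paper does not actually prove this proposition; it is quoted verbatim from \cite[Theorem 4.8]{BY}, so there is no internal argument to compare against. Your framework is the right one and is essentially how one must proceed: since $C\in\bch^{\ell-1}(\A)$, the set $P=\barC\cap F^{\ell-1}$ is a compact convex body (so the $\partial B$ condition is vacuous), the boundary degree equals the sum of indices of any interior extension, and by Proposition~\ref{prop:charsep} the facets split into separating ones ($n_H\notin W^\perp$), where $U^{C^\lor}$ must point out of $P$, and non-separating ones ($n_H\in W^\perp$), where it must point into $P$. Your treatment of the non-separating facets is correct: there $n_H\cdot V=\epsilon\,n_H\cdot(z^0-z)$ is constant on the facet with the sign of the $C$-side.

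The gap is in the separating facets, and it is not the small perturbation you describe. Write $\delta=n_H\cdot p^0-\tilde c_H$ (sign $=$ the $C$-side) and use the facet equation $n_H|_Y\cdot y+n_H|_Z\cdot z=\tilde c_H$ to eliminate $y$: one finds
\[
n_H\cdot V \;=\; n_H|_Y\cdot(y-y^0)+\epsilon\, n_H|_Z\cdot(z^0-z)\;=\;(1+\epsilon)\,n_H|_Z\cdot(z^0-z)\;-\;\delta .
\]
The required sign is that of $-\delta$, but the term $(1+\epsilon)\,n_H|_Z\cdot(z^0-z)$ is of order one, not $O(\epsilon)$: on the facet the $Y$-term is \emph{forced} to equal $n_H|_Z\cdot(z^0-z)-\delta$ and hence already carries the uncontrolled $Z$-contribution. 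So taking $\epsilon$ small does not make ``the dominant $Y$-term control the sign,'' and choosing $y^0$ off the $y$-projection of the facet only guarantees $y\neq y^0$, not the sign of $n_H|_Y\cdot(y-y^0)$. Your alternative fix fails for a structural reason: a single linear (or projective) change of coordinates making \emph{every} separating normal purely $Y$-directional would force all separating hyperplanes to contain one common $(\ell-2-\dim X(C))$-dimensional subspace at infinity transverse to $X(C)$, which is false for a general arrangement. Thus the actual content of \cite[Theorem 4.8]{BY} --- producing a vector field that genuinely satisfies the side conditions on every wall of $\barC\cap F^{\ell-1}$ while having a single zero of index $(-1)^{\ell-1-\dim X(C)}$ (or an equivalent homotopy of the boundary map to a model map) --- is still missing from your argument; this is precisely the part that requires the finer geometry of $\barC$ near $\barH_\infty$ developed in \cite{BY}.
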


\subsection{Local system cohomology}

A rank one $\K$-local system $\LL$ on $M(\A)$ is 
determined by the monodromy $q_i\in\K^\times$ around each hyperplane 
$\barH_i$. We denote $q_\infty=(q_1q_2\cdots q_n)^{-1}$, which is the monodromy around the hyperplane $\barH_\infty$. 
Since $\dim H^*(M(\A),\LL)$ does not change  with respect to field extension, without loss of generality we always assume that $\K$ is algebraically closed.
Fix $q_i^{1/2}=\sqrt{q_i}$. Set $q_{\infty}^{1/2}:=\left(q_1^{1/2}\cdots q_n^{1/2}\right)^{-1}$ and 
\[
\Delta(C, C'):=
\prod_{H_i\in\Sep(C, C')}q_i^{1/2}-
\prod_{H_i\in\Sep(C, C')}q_i^{-1/2}. 
\]
 Define the linear map 
$\nabla_{\LL}:\K[\ch^k(\A)]\longrightarrow\K[\ch^{k+1}(\A)]$ by 
\[
\nabla_{\LL}([C])= 
\sum_{C'\in \ch^{k+1}} \deg(C,C')\cdot 
\Delta(C, C')\cdot [C'].
\]
Then we have the following result due to the second named author. 

\begin{theorem}
  \label{thm:localchamber}
\cite{y-lef}
$(\K[\ch^\bullet(\A)],\nabla_{\LL})$ is a cochain complex. 
Furthermore, there is a natural isomorphism of cohomology groups for any $k\geq 0$: 
\[
H^k(\K[\ch^\bullet(\A)],\nabla_{\LL}) \simeq H^k(M(\A), \LL).
\]  
\end{theorem}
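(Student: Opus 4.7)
The plan is to realize $M(\A)$ as a minimal CW complex $X$ whose $k$-cells $\{e_C\}_{C\in\ch^k(\A)}$ are indexed by $\ch^k(\A)$, and then to identify $(\K[\ch^\bullet(\A)],\nabla_{\LL})$ with the cellular cochain complex of $X$ with coefficients in $\LL$.

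First, I would build the CW model $X$ by iterating the Lefschetz hyperplane section theorem for affine varieties along the complexified flag $F^0_\C\subset F^1_\C\subset\dots\subset F^\ell_\C=\C^\ell$. Setting $M^k:=M(\A)\cap F^k_\C$, the affine Lefschetz theorem implies that $M^k$ is obtained from $M^{k-1}$ by attaching cells of real dimension $k$. A real Morse-theoretic refinement pins down these cells: the hypothesis that $\FF$ sits near $\barH_\infty$ permits the construction of a gradient-like vector field on $M(\A)$ whose index-$k$ critical points are in natural bijection with $\ch^k(\A)$, so each $C\in\ch^k(\A)$ produces exactly one $k$-cell $e_C$ (its unstable manifold). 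Since $\#\ch^k(\A)=b_k(M(\A))$ by Proposition \ref{prop:bchuch}$(i)$, the resulting structure is minimal.

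Next, I would compute the cellular coboundary. For $C\in\ch^k(\A)$ and $C'\in\ch^{k+1}(\A)$, the coefficient of $e_C$ in the boundary of $e_{C'}$ is controlled by the normalized vector field $U^{C'}/|U^{C'}|$ on $\partial(\overline{C}\cap B^k)$, and by Definition \ref{degreemap} this coefficient is precisely $\deg(C,C')$. Hence the constant-coefficient cellular cochain complex of $X$ equals $(\K[\ch^\bullet(\A)],d)$ with $d[C]=\sum_{C'}\deg(C,C')[C']$. To twist by $\LL$, lift the cell decomposition to the universal abelian cover $\widetilde X\to X$; its cellular complex is free over $\K[\Z^n]\cong\K[t_1^{\pm1},\dots,t_n^{\pm1}]$ with the same indexing, and the cochain complex computing $H^*(M(\A),\LL)$ is obtained by specializing $t_i\mapsto q_i$. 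The crucial geometric input is that a lift of the attaching sphere of $e_{C'}$ that meets the lift of $e_C$ winds once around $\barH_i$ exactly when $H_i\in\Sep(C,C')$, so after the symmetric normalization by the chosen square roots $q_i^{1/2}$ the twisted coboundary coefficient collapses to $\Delta(C,C')\cdot\deg(C,C')$. This identifies the twisted cellular cochain complex with $(\K[\ch^\bullet(\A)],\nabla_{\LL})$, at which point both claims of the theorem follow: $\nabla_{\LL}^2=0$ because it is a cellular differential, and its cohomology equals $H^*(M(\A),\LL)$ by the standard identification of cellular cohomology with local coefficients.

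The principal obstacle is the Morse-theoretic construction of the minimal CW model: producing a gradient-like vector field on $M(\A)$ whose critical set is in natural bijection with $\bigsqcup_k\ch^k(\A)$, with Morse indices equal to chamber level, is delicate and depends essentially on $\FF$ being positioned sufficiently close to $\barH_\infty$. A secondary subtlety is the symmetric normalization by $q_i^{1/2}$ in the twist step, which is precisely what makes $\Delta(C,C')$ an antisymmetric difference and delivers $\nabla_{\LL}^2=0$ on the nose rather than only up to a unit in $\K^\times$.
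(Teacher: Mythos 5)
Your proposal follows essentially the same route as the cited source \cite{y-lef} (the paper itself only quotes this theorem without proof): iterate the affine Lefschetz hyperplane section theorem along the complexified generic flag to obtain a minimal CW model with $k$-cells indexed by $\ch^k(\A)$, then identify the $\LL$-twisted cellular coboundary coefficients with $\deg(C,C')\cdot\Delta(C,C')$ via the vector field $U^{C'}$ and the separating hyperplanes. The sketch correctly isolates the two genuinely hard points (the chamber-indexed cell structure and the symmetric $q_i^{1/2}$-normalization), so it is an accurate outline of the argument.
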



\section{Proof of Theorem \ref{thm main 1}} 

\label{sec:thm}


 First we recall the deletion-restriction method for certain non-trivial rank one local systems discovered by Cohen \cite[Theorem 4]{Cohen02}. The choice of a distinguished  hyperplane $\barH$ in $ \barA$ gives rise to a triple of arrangements $(\A, \A' , \A'' ),$ where $\A' = \barA\setminus \{\barH\}$
and $\A''$ is the arrangement in $\barH$ obtained by $\A'\cap \barH$. 
Let $M = M(\A)$, $M' = M(\A' )$, and $M'' = M(\A'' )$ be the complements of the
arrangements in a triple. 

 Let $i\colon M\to M'$ and $j\colon M''\to M'$ be the natural inclusions. Assume that the monodromy of $\LL$ around the distinguished hyperplane $\overline{H}$ is trivial. Then $\LL$ extends to a rank one local system on $M'$, denoted by $\LL'$. Moreover, there is an induced local system  $\LL'' =
j^*\LL'$ on $M''$ by restriction. Call $(\LL,\LL' ,\LL'' )$ a triple of local systems on $(M, M' , M'' ).$
By \cite[Theorem 4]{Cohen02} there is a long exact
sequence in local system cohomology
\begin{equation}\label{les}
   \cdots \to H^k(M',\LL') \to H^k(M,\LL)\to H^{k-1}(M'',\LL'')\to \cdots
\end{equation}

\begin{lemma} \label{lem 2}  Let $\LL$ be a rank one $\K$-local system  on $M$ such that  the monodromy of $\LL$ along a distinguished hyperplane $\barH$ is trivial.
Consider the long exact sequence (\ref{les}). 
If $\dim H^k(M,\LL) =b_k(M) $ for some $0\leq k\leq \ell $, then we have \begin{center}
    $\dim H^k(M',\LL') =b_k(M') $ and $\dim H^{k-1}(M'',\LL'') =b_{k-1}(M'') $.
\end{center}
\end{lemma}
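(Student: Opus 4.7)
The plan is simple: combine the long exact sequence (\ref{les}) with the classical additivity of Betti numbers for an arrangement triple. Applying (\ref{les}) to the trivial local system, together with the vanishing of the connecting homomorphisms (a standard consequence of Orlik--Solomon theory, or equivalently of Brieskorn's decomposition), yields the additivity
$$b_k(M) = b_k(M') + b_{k-1}(M'')$$
for every $k$. I would record this as a preliminary observation, since it converts the hypothesis on $b_k(M)$ into a statement about a sum of Betti numbers on the two smaller complements.

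Next, I would invoke a purely linear-algebraic observation about long exact sequences: for any three consecutive terms $A \to B \to C$ of an exact sequence of finite-dimensional vector spaces, exactness at $B$ gives $\dim B = \dim\image(A \to B) + \dim\image(B \to C) \le \dim A + \dim C$. Applied to the segment
$$H^k(M',\LL') \longrightarrow H^k(M,\LL) \longrightarrow H^{k-1}(M'',\LL'')$$
of (\ref{les}), this yields
$$\dim H^k(M,\LL) \le \dim H^k(M',\LL') + \dim H^{k-1}(M'',\LL'').$$

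Combining this with the hypothesis $\dim H^k(M,\LL) = b_k(M)$ and with the general inequality (\ref{inequality}) applied to $M'$ and $M''$, I would assemble the chain
$$b_k(M') + b_{k-1}(M'') = b_k(M) = \dim H^k(M,\LL) \le \dim H^k(M',\LL') + \dim H^{k-1}(M'',\LL'') \le b_k(M') + b_{k-1}(M'').$$
All inequalities must then be equalities, and since each summand on the right is individually bounded above by the corresponding Betti number, both summands must saturate separately, giving $\dim H^k(M',\LL') = b_k(M')$ and $\dim H^{k-1}(M'',\LL'') = b_{k-1}(M'')$. There is no real obstacle here; the only ingredient deserving care is the citation or quick derivation of the additive identity for Betti numbers in a triple, which itself follows from (\ref{les}) with trivial coefficients.
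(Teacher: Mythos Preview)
Your proof is correct and follows essentially the same approach as the paper: both combine the additivity $b_k(M)=b_k(M')+b_{k-1}(M'')$ from the deletion--restriction triple with the subadditivity $\dim H^k(M,\LL)\le \dim H^k(M',\LL')+\dim H^{k-1}(M'',\LL'')$ coming from (\ref{les}), and then use (\ref{inequality}) on $M'$ and $M''$ to force both inequalities to be equalities. Your write-up is slightly more explicit about the linear-algebraic step and about deriving the Betti-number additivity, but the argument is the same.
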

\begin{proof}
    The classical deletion-restriction method gives the following equality: $$ b_k(M)=b_k(M')+b_{k-1}(M'').$$  The long exact sequence (\ref{les}) implies that 
    $$\dim H^k(M,\LL) \leq \dim H^k(M',\LL')+ \dim H^{k-1}(M'',\LL'').$$
     Then the assumption $\dim H^k(M,\LL) =b_k(M) $ implies that
$$ b_k(M')+b_{k-1}(M'')\leq \dim H^k(M',\LL')+ \dim H^{k-1}(M'',\LL'') .$$
Hence the claims follow from (\ref{inequality}) for $\LL'$ and $\LL''$.  
\end{proof}
\begin{proof}[Proof of Theorem \ref{thm main 1}]
When $k=0$, the claim is obvious. So we assume that $k\geq 1$.  We first reduce the proof to the case $k=\ell$. 
Set $\B=\A \cap F^{k}$, which is an affine arrangement in $F^k$. Since $F^k$ is generic,  
$M(\A)$ is obtained from $M(\B)$ by attaching cells with dimension $\geq k+1$, see \cite{dim-pap}.  
So the  cochain complex for the maximal abelian cover of $M(\B)$ is obtained by truncating the cochain complex (\ref{cochain}) as
$$ 0\to C^{0}(\tilde{X}, \K)\to \cdots  \overset{\partial_{k-1}}{\to}     C^{k}(\tilde{X}, \K)\to 0.$$
It implies that $\dim H^k(M(\A),\LL) \leq \dim H^k(M(\B),\LL|_{M(\B)})$.
Then we have $$  b_k(M(\A))=\dim H^k(M(\A),\LL) \leq \dim H^k(M(\B),\LL|_{M(\B)}) \leq b_k(M(\B)).$$ But due to the genericity of $F^k$, we have $ b_k(M(\A))= b_k(M(\B))$. Hence we have $\dim H^k(M(\B),\LL|_{M(\B)}) =b_k(M(\B)) .$ Note that $\LL$ is a constant sheaf if and only if so is $\LL|_{M(\B)}$. Then one can replace $\A$ by $\B$ and $\LL$ by $\LL|_{M(\B)}$, respectively.

  Theorem \ref{thm:localchamber} and Proposition \ref{prop:bchuch} shows that  $\dim H^\ell(M,\LL) =b_\ell(M(\A)) $ if and only if 
\begin{center}
  $\deg(C,C')\cdot \Delta(C, C')=0$ for any $C\in \ch^{\ell-1}(\A)$ and $C'\in \ch^{\ell}(\A)$.  
\end{center}
 Since $\A$ is essential, there always exists a chamber 
$C\in\bch^{\ell-1}(\A)$ such that $\dim X(C)=\ell -1$. 
Note that $C^\lor\in \uch^\ell(\A)$. By Proposition \ref{prop:charsep} and Proposition \ref{prop:degree}, we have $ \deg(C,C^\lor)=1$ and $\Sep(C, C^\lor)=\A$. 
Then $\dim H^\ell(M,\LL) =b_\ell(M(\A)) $ implies that $$0= \Delta(C, C^\lor)=(q_1q_2\cdots q_n)^{\frac{1}{2}}-(q_1q_2\cdots q_n)^{-\frac{1}{2}}$$
Hence $q_\infty^{-1}=q_1q_2\cdots q_n=1.$
Then $\LL$ has trivial monodromy along the hyperplane at infinity $\barH_\infty$. 
Applying Lemma \ref{lem 2}, we have that \begin{center}
    $\dim H^\ell (M',\LL') =b_\ell(M') $ and $\dim H^{\ell-1}(M'',\LL'') =b_{\ell-1}(M'') $.
\end{center}
Note that both $\A'$ and $\A''$ are real hyperplane arrangements. 
If $b_\ell(M')>0$, then $\LL'$ is the constant sheaf  by induction on the dimension of the ambient space and the number of hyperplanes for real hyperplane arrangements. Hence $\LL$ is also the constant sheaf. 
On the other hand, if $b_\ell(M')=0$,  $\A'$ is not essential. It implies that $ \barH_\infty$ is transversal to all other hyperplanes in $\barA$. In particular, $ \LL$ is the constant sheaf if and only if so is $\LL''$. Clearly $\dim H^{\ell-1}(M'',\LL'') =b_{\ell-1}(M'') =b_\ell(M)>0$ in this case. Then the claim follows by induction for $\LL''$.
\end{proof}
\begin{remark} \label{rem nonzero}
    The above proof indeed shows that $\partial_k(q)\neq 0$ for any $0\leq k \leq \ell-1$ if $\A$ is real and $\LL$ is not the constant sheaf.
\end{remark}

\section{Proof of Proposition \ref{prop cup product}} \label{sec:prop}



  We give the proof by induction on $\ell$ with two steps.  If $\ell=1$, the claim is obvious. For induction, we assume that the claim holds for any essential hyperplane arrangement in $\C^{\ell-1}$.

\medskip

\noindent Step 1:   Assume that $\A$ is central in $\C^\ell$. We reduce the proof for $\A$ to lower dimension case, then  prove it by induction.

If $\sum_{i=1}^n w_i \neq 0,$ it is well known that the Aomoto complex $(H^{\ast}(M(\A),\K), w\cup)$ is exact, see e.g. \cite[Proposition 2.1]{yuz-bos}. It implies  that  the cup product map $  H^{k-1}(M(\A),\K)\overset{w\cup}{\to} H^{k}(M(\A),\K)$ has rank $ |\sum_{j=0}^{k-1} (-1)^j b_j(M(\A))|$. 
Since $\A$ is essential, it follows from \cite[Lemma 8]{y-generic} that $ |\sum_{j=0}^{k-1} (-1)^j b_j(M(\A))|>0$ for any $1\leq k\leq \ell$.  Then the claim follows. 
On the other hand,  consider the case $\sum_{i=1}^n w_i = 0$. Note that $\A$ is central. Let $U(\A)$ denote the  complement of the
corresponding projective arrangement in $\mathbb{CP}^{\ell -1}$. Then $M(\A)=U(\A)\times \C^*$. One can identify $H^1(U(\A),\K)$ with the affine space $\{x\in \K^n \mid x_1+\cdots+x_n=0\}\cong \K^{n-1}$. Since we assumed that $\sum_{i=1}^n w_i = 0$, $w$ can be viewed as a non-zero element in $H^1(U(\A),\K)$. Moreover, by K\"unneth formula the map  $  H^{k-1}(M(\A),\K)\overset{w\cup}{\to} H^{k}(M(\A),\K)$ is isomorphic to the direct sum of
\begin{center}
    $  H^{k-1}(U(\A),\K)\overset{w\cup}{\to} H^{k}(U(\A),\K) $ and $  H^{k-2}(U(\A),\K)\overset{w\cup}{\to} H^{k-1}(U(\A),\K).$
    \end{center}
 Since $U(\A)$ is a complement 
to an essential hyperplane arrangement with less dimension for its ambient space (in $\C^{\ell-1}$),  the claim follows  by induction. 

\medskip
 
\noindent Step 2: Next we prove the claim when $\A$ in $\C^\ell$ is not necessarily central by reducing to the central case with the same dimension for the ambient space.

 For $w\neq 0$, without loss of generality, we assume that $w_1 \neq 0$. Since $\A$ is essential, there exists an intersection point $x\in L_0(\A)$ contained in $H_1$. Consider the sub-arrangement $\A_x$ of $\A$ consisting of all hyperplanes passing through $x$. Let  $M(\A_x) $ denote the complement of $\A_x$. Set $M_x =M(\A) \cap B$, where $B$ is a small enough open ball in $\C^\ell$ centered at $x$. 
 We have two inclusions $u\colon M_x \to M(\A) $ and $v\colon M(\A) \to M(\A_x)$. Then 
 $v\circ u\colon M_x \to M(\A_x) $ is a homotopy equivalence.
Hence we have the following commutative diagram by  functoriality 
$$\xymatrix{
     H^{k}(M(\A_x),\K) \ar[r]  &  H^k(M(\A),\K) \ar[r]  &  H^k(M_x,\K)  \\
     H^{k-1}(M(\A_x),\K) \ar[u]^{w_x\cup } \ar[r] &   H^{k-1}(M(\A),\K) \ar[u]^{w\cup} \ar[r] &  H^{k-1}(M_x,\K) \ar[u]^{w_x \cup} 
    },$$
where $w_x=u^*(w)\in  H^1(M_x,\K)\cong  H^1(M(\A_x),\K)$ is non-zero by the assumption $w_1\neq 0$. 
Note that both  horizontal maps on the right square are sujective. Meanwhile, the third vertical map in the above diagram is non-zero for $1\leq k \leq \ell$, since $M_x$ is homotopy equivalent to $M(\A_x)$ and $\A_x$ is a central essential hyperplane arrangement. Putting all together, we get that   $  H^{k-1}(M(\A),\K)\overset{w\cup}{\to} H^{k}(M(\A),\K)$ is non-zero. 

\medskip

\noindent
{\bf Acknowledgements.} 
The authors would like to thank Laurentiu Maxim and Botong Wang for helpful discussions on the paper.
Y. Liu is supported by National Key Research and Development Project SQ2020YFA070080, the Project of Stable Support for Youth Team in Basic Research Field, CAS (YSBR-001),  the project ``Analysis and Geometry on Bundles'' of Ministry of Science and Technology of the People's Republic of China and  Fundamental Research Funds for the Central Universities. 
M. Yoshinaga is supported by JSPS KAKENHI 
JP22K18668, JP23H00081.

\end{document}